\documentclass[a4paper, 11pt]{amsart}

\usepackage[T1]{fontenc}
\usepackage[utf8]{inputenc}

\usepackage{amssymb}

\makeatletter
\@namedef{subjclassname@2010}{%
  \textup{2010} Mathematics Subject Classification}
\makeatother

\newcommand{\ov}[1]{\overline{#1}}

\allowdisplaybreaks[2]

\newtheorem{theorem}{Theorem}[section]

\newtheorem{corollary}[theorem]{Corollary}
\theoremstyle{definition}

\newtheorem{definition}[theorem]{Definition}
\newtheorem{example}[theorem]{Example}

\numberwithin{equation}{section}

\begin{document}

\author{Matej Brešar}
\address{ Faculty of Mathematics and Physics,  University of Ljubljana,  and Faculty of Natural Sciences and Mathematics, University
of Maribor, Slovenia}
\email{matej.bresar@fmf.uni-lj.si}

\thanks{\emph{Mathematics Subject Classification}. 16R60, 46H05, 46L05, 47B48.}
\keywords{Functional identities, zero Lie product determined Banach algebra, commuting map, derivation, commutativity preserving map.}

\thanks{The author was supported by ARRS Grant P1-0288.}

\title[Functional identities and zLpd Banach algebras]{Functional identities and zero Lie product determined  Banach algebras}

\begin{abstract} Three problems connecting functional identities to the recently introduced notion of a zero Lie product determined Banach algebra are discussed. The first one concerns commuting linear maps, the second one concerns derivations that preserve commutativity, and the third one concerns bijective commutativity preserving linear maps.
\end{abstract}
\maketitle

\section{Introduction}

A Banach algebra $A$ is said to be {\em zero Lie product determined} ({\em zLpd} for short)
if  every continuous bilinear functional $\varphi: A \times A\to \mathbb C$ with the property that for all $x,y\in A$,
\begin{equation}\label{0e}[x,y]=0\implies \varphi(x,y)=0,\end{equation}
is of the form \begin{equation}\label{1e}\varphi(x, y) = \omega([x,y])\end{equation} for
some continuous linear functional $\omega$ on $A$  (here, $[x,y]$ stands for $xy-yx$). This notion was introduced in \cite{ABEV0}. The main message of this and the subsequent paper \cite{ABEV} is that the class of  zLpd Banach algebras is quite large; in particular, it includes $C^*$-algebras
and group algebras $L ^1(G)$ where $G$ is a locally compact group. We also mention a little technical result, which will be tacitly used throughout the paper: a Banach algebra
$A$ is zLpd if and only if  for any Banach space $X$, every continuous bilinear map $\varphi: A\times A \to 
X$ satisfying
\eqref{0e} is of the form \eqref{1e} for some 
continuous linear map $\omega : [A, A] \to X$ \cite[Proposition 3.1]{ABEV0}.

Replacing the Lie product $[x,y]$ by the ordinary product $xy$ in the above definition, one obtains the definition of a 
 zero product determined (zpd) Banach algebra.  These Banach algebras have been studied extensively
over the last decade (as a matter of fact, most papers deal with a slightly broader class of Banach algebras
 with property $\mathbb B$). Besides providing examples of such Banach algebras, a lot of work has been done in finding applications, i.e., in showing that various problems can be successfully
resolved in zpd Banach algebras (see the recent papers \cite{ABESV2, AEV} and references therein). On the other hand, the  two papers on zLpd Banach algebras \cite{ABEV0, ABEV} do not provide applications, so it may not be clear thus far whether this notion is indeed useful. Our goal is to remedy this situation. More specifically, we will consider three problems related to functional identities, and show that they can be solved in zLpd Banach algebras that satisfy some  technical conditions. We begin, in Section \ref{s2}, by providing the necessary information on functional identities, and then discuss these three problems in Sections \ref{s3}-\ref{s5}.

In Section \ref{s3}, we consider commuting  maps, i.e.,  maps $f$ from an algebra $A$ to itself that satisfy $[f(x),x]=0$ for all $x\in A$. These maps have played a crucial role in the development of  the theory of functional identities (see \cite{FIbook}).  Under the assumption $A$ is a semiprimitive (= semisimple)
zLpd Banach algebra, we will show that  every commuting continuous linear map $f$ can be written as a sum $f= \gamma  + \mu$ where $\gamma$ lies in the 
centroid of the Lie algebra $A^-$ (i.e., $\gamma([x,y])= [x,\gamma(y)] =[\gamma(x),y]$ for all $x,y\in A$) and $\mu$ maps into the center of $A$.

There are many results in the literature, both in the algebraic and the analytic context, that consider  commutativity conditions on the range of derivations. One of the earliest ones, due to Herstein,
states that the range of a nonzero derivation $d$ of a prime ring $R$ is commutative (i.e., $[d(x),d(y)]=0$ for all $x,y\in R$)  only if the characteristic of $R$ is $2$ and $R$ can be embedded into $M_2(F)$, the ring of $2\times 2$ matrices over a field $F$ \cite{Her2}. This is a fairly easy result, which, however, stimulated further research by many authors (see, e.g., \cite{L, PL}). In Section \ref{s4}, we will discuss nonzero  derivations $d$ on  a Banach algebra $A$ that satisfy $[d(x),d(y)]=0$ only when $x$ and $y$ commute (that is,   derivations that preserve  commutativity). It is easy to see that all derivations of the algebra $M_2(\mathbb C$) satisfy this condition, and we will show that, under appropriate conditions, a zLpd Banach algebra $A$ having such a derivation $d$
must be close (or even isomorphic) to $M_2(\mathbb C)$.

Our deepest result is established in Section \ref{s5} where we consider bijective commutativity preserving linear maps between Banach algebras. The study of such maps has a long history. For details, we refer to the survey paper \cite{Sem} 
(in particular, see the discussion around \cite[Theorem 2.2]{Sem} which is
 one of the crucial results that  inspired the study of the zpd and zLpd   properties). It is also noteworthy that the interest in commutativity preservers on operator algebras has recently reemerged \cite{Ham, Mol}, and, as the author  learned from a private communication with J.  Hamhalter, the characterization of bijective linear maps between von Neumann algebras that preserve commutativity in both directions  from \cite{BM} should be useful in these new studies. In the context of zLpd Banach algebras satisfying certain natural constraints (which have a clear description in von Neumann algebras), we will  consider maps that preserve commutativity in one direction only.  Our result states that either the map under consideration preserves commutativity
in both directions, in which case it has a standard form expressible   in terms of (anti)isomorphisms, or it maps a noncentral ideal into the center. 
An example of the latter possibility is provided.


\section{Functional identities preliminaries} \label{s2}

Let $Q$ be a unital ring with center $Z(Q)$ and let $R$
be a nonempty subset of $Q$.  
Let $m$ be a positive integer. 
For elements $x_i \in R$, $i=1,2,\ldots,m$, 
we write
\begin{eqnarray*}
\ov{x}_m &=& (x_1,\ldots,x_m)\in{R}^m,\\
\ov{x}_{m}^{i}&=& (x_1,\ldots,x_{i-1},x_{i+1},\ldots,x_m)\in{R}^{m-1},\\
\ov{x}_{m}^{ij}= \ov{x}_{m}^{ji} &=& (x_1,\ldots,x_{i-1},x_{i+1},\ldots,
x_{j-1},x_{j+1}\ldots,x_m)\in{R}^{m-2}.
\end{eqnarray*}
Let $I,J$ be subsets of $\{1,2,\ldots,m\}$, and for each $i\in I$ and $j\in J$
let  
$$E_i:R^{m-1}\to Q\quad\mbox{and}\quad F_j:R^{m-1}\to Q$$
be arbitrary maps (if $m=1$, we  consider $E_i$ and 
$F_j$ as elements of $Q$). 
The basic functional identities, on which the general theory is based, are
\begin{eqnarray}
\sum_{i\in I}E_i(\ov{x}_m^i)x_i+ \sum_{j\in J}x_jF_j(\ov{x}_m^j)
& = &
0\quad\mbox{for all $\ov{x}_m\in R^m$},\label{3S1}\\
\sum_{i\in I}E_i(\ov{x}_m^i)x_i+ \sum_{j\in J}x_jF_j(\ov{x}_m^j)
& \in & Z(Q)\quad\mbox{for all $\ov{x}_m\in R^m$}.\label{3S2}
\end{eqnarray}
Of course, (\ref{3S1}) implies (\ref{3S2}). Thus, one should not understand
that (\ref{3S1}) and (\ref{3S2}) are satisfied simultaneously by the same maps $E_i$
and $F_j$. Each of the two identities has to be treated separately.  

We define the  {\em standard solution} of functional identities  (\ref{3S1}) and (\ref{3S2})  as
\begin{eqnarray}\label{3S3}
E_i(\ov{x}_m^i) & = & \sum_{j\in J,\atop
j\not=i}x_jp_{ij}(\ov{x}_m^{ij})
+\lambda_i(\ov{x}_m^i),\quad i\in I,\nonumber\\
F_j(\ov{x}_m^j) & = & -\sum_{i\in I,\atop
i\not=j}p_{ij}(\ov{x}_m^{ij})x_i
-\lambda_j(\ov{x}_m^j),\quad j\in J,\\
&   & \lambda_k=0\quad\mbox{if}\quad k\not\in I\cap J,\nonumber
\end{eqnarray}
where
\begin{eqnarray*}
&  & p_{ij}:R^{m-2}\to Q,\;\;
i\in I,\; j\in J,\; i\not=j,\nonumber\\
&  & \lambda_k:R^{m-1}\to Z(Q),\;\; k\in I\cup J
\end{eqnarray*}
are any maps
 (if $m=1$, this should  be understood as   that $p_{ij} =0$ and 
$\lambda_k$ is an element in $Z(Q)$). Note that \eqref{3S3} indeed implies  (\ref{3S1}), and hence also  (\ref{3S2}).

The case where one of the sets $I$ and $J$ is empty is not excluded.
By convention, the sum over
$\emptyset$ is $0$. Thus, if  
 $J=\emptyset$,
(\ref{3S1})  reads as
$$
\sum_{i\in I}E_i(\ov{x}_m^i)x_i = 0\quad\mbox{for all $\ov{x}_m\in R^m$,}
$$
and the standard solution of this functional identity is $E_i=0$ for all $i\in I$. Similarly, the standard solution of
$$\sum_{j\in J}x_jF_j(\ov{x}_m^j) = 
0\quad\mbox{for all $\ov{x}_m\in R^m$}
$$
 is $F_j=0$ for each $j$.

We are interested in  sets 
$R$ with the property that 
the functional identities (\ref{3S1}) and (\ref{3S2}) have only standard solutions, provided that 
the index sets
$I$ and $J$ are small enough. The following definition describes this precisely.

\begin{definition}\label{def1}Let $d$ be a positive integer. We  say that $R$ is a  {\em
$d$-free subset of $Q$} if  
the following two conditions hold for all $m\ge 1$ and all
 $I,J\subseteq \{1,2,\ldots,m\}$:
\begin{enumerate}
\item[(a)]If
$\max\{|I|,|J|\}\le d$, then (\ref{3S1}) implies (\ref{3S3}).
\item[(b)] If
$\max\{|I|,|J|\}\le d-1$, then (\ref{3S2}) implies (\ref{3S3}).
\end{enumerate}
If $R=Q$ is a $d$-free subset of itself, then we simply say that $R$ is {\em $d$-free}. 
\end{definition}
Note that $d$-freeness implies $d'$-freeness for every $d'< d$.

One of the fundamental theorems on functional identities states that a prime ring $R$ is a $d$-free (with $d\ge 2$) subset of $Q_{ml}(R)$,  the  maximal left  ring of quotients of $R$, if and only if $R$ cannot be embedded into the ring of $(d-1)\times (d-1)$ matrices over a field
  \cite[Theorem 5.11]{FIbook} (equivalently, $R$ does not satisfy a polynomial identity of degree less than $2d$). Another important result states that if $S$ is any unital ring, then  $R=M_d(S)$, the ring of $d\times d$ matrices over $S$,  is $d$-free \cite[Corollary 2.22]{FIbook}.

In this paper, however, we are  interested in  $d$-freeness of Banach algebras. The $C^*$-algebra case is relatively well-understood. For example, a prime $C^*$-algebra $A$ is a $d$-free subset of $Q_{ml}(A)$ if and only if $A$ is not isomorphic to the matrix algebra  $M_k(\mathbb C)$ with $k< d$. This follows from the  theorem stated in the preceding paragraph, 
Posner's theorem on prime rings satisfying a polynomial identity \cite{P2}, and the standard description of finite-dimensional $C^*$-algebras. Further, a von Neumann algebra is $d$-free provided that it has no central summands of 
type $I_1,\dots,I_{d-1}$ \cite[Theorem 3.2 and Lemma 3.12]{ABV}.

At present, not much is known about $d$-freeness of  other zLpd Banach algebras, like group algebras $L ^1(G)$. 
Our main results will therefore have clear applications  only for $C^*$-algebras, although they are
potentially applicable to a much broader class of Banach algebras.


\section{Commuting linear maps}\label{s3}

The { \em centroid} of a Lie algebra $L$, denoted Cent$(L)$, is the space of all linear maps $\gamma:L\to L$ satisfying 
\begin{equation*} \gamma([x,y])= [x,\gamma(y)] =[\gamma(x),y]\quad\mbox{for all $x,y\in L$.}\end{equation*}
This is a classical notion in the theory of Lie and other nonassociative algebras (see, e.g., 
\cite[Section X.1]{J} and
\cite[Sections 1.6 and 1.7]{Mc}).
We are only interested in the case where $A$ is an associative algebra and $L=A^-$, i.e., $L$ is the vector space $A$ endowed with the Lie product $[x,y]=xy-yx$.

Recall that a map $f:A\to A$ is said to be commuting if $[f(x),x]=0$ for all $x\in A$. 
It is obvious that every map in Cent$(A^-)$ is commuting, and so is every map having the range in $Z(A)$, the center of $A$. Also, the sum of two commuting maps is  commuting.
The ``if'' part of the following theorem is thus trivial. 

\begin{theorem}\label{tco}
Let $A$ be a semiprimitive zLpd Banach algebra. Then a continuous linear map $f:A\to A$ is commuting if and only if there exist $\gamma\in {\rm Cent}(A^-)$ and  a linear map $\mu:A\to Z(A)$ such that $f=\gamma + \mu$.
\end{theorem}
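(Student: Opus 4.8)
The plan is to linearize the commuting condition and feed it into the functional identity machinery. Starting from $[f(x),x]=0$ for all $x\in A$, I replace $x$ by $x+y$ and use bilinearity of the Lie bracket to obtain the polarized identity $[f(x),y]+[f(y),x]=0$ for all $x,y\in A$, i.e.
\begin{equation*}
f(x)y - yf(x) + f(y)x - xf(y) = 0 \quad\text{for all } x,y\in A.
\end{equation*}
This is a functional identity of the form \eqref{3S1} with $m=2$, $I=J=\{1,2\}$, $E_1(y)=f(y)$, $E_2(x)=-x$ (wait — one must be careful: rewriting gives $f(y)x + (-y)f(x) - xf(y) + f(x)(-y) \cdots$); in any case it is a standard $2$-variable functional identity with coefficient maps built from $f$ and the identity map.

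The obstacle is that $A$ need not be $d$-free — we are only assuming it is semiprimitive and zLpd, not that it has large matrix-free quotients — so I cannot simply invoke Definition \ref{def1}. This is exactly where the zLpd hypothesis must do the work in place of $d$-freeness. The idea is to run the ``standard'' argument for commuting maps at the level of bilinear maps: define, for a fixed continuous linear functional $\omega$ on $A$ (or more generally with values in a Banach space, via the technical result cited from \cite[Proposition 3.1]{ABEV0}), the continuous bilinear functional $\varphi(x,y) = \omega(f(x)y - yf(x))$ or a suitable variant, and check that $[x,y]=0 \implies \varphi(x,y)=0$. Indeed, if $[x,y]=0$ then from the polarized identity $[f(x),y] = -[f(y),x] = [x,f(y)]$, and one wants to show this forces the relevant bilinear expression to vanish or to be of the form $\omega([x,y])=0$. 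The zLpd property then yields $f(x)y - yf(x) = \psi([x,y])$ for some continuous linear map $\psi:[A,A]\to A$ — that is, $[f(x),y]$ depends only on $[x,y]$.

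Granting that $[f(x),y]$ is a function of $[x,y]$ alone, say $[f(x),y]=\psi([x,y])$, I would then set $\gamma$ on $[A,A]$ via this $\psi$ and verify it satisfies the centroid identities: the symmetry $[f(x),y]=[x,f(y)]$ gives $\psi([x,y]) = [x,f(y)]$, and replacing $y$ by a bracket and using the Jacobi identity together with the fact that $A$ is semiprimitive (so $[A,A]$ is ``large'' — its annihilator considerations and the absence of nonzero commutative ideals in a semiprimitive algebra let one extend $\gamma$ consistently to all of $A$ and control the discrepancy) one shows $\gamma:=\psi$ extends to a well-defined element of $\mathrm{Cent}(A^-)$. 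Then $\mu := f-\gamma$ satisfies $[\mu(x),y]=0$ for all $x,y$, so $\mu(x)\in Z(A)$ for every $x$. The step I expect to be the main obstacle is precisely the passage from ``$[f(x),y]$ is determined by $[x,y]$'' to producing a genuine linear map $\gamma$ on all of $A$ in the centroid: one must handle the possibility that $[A,A]$ is not all of $A$ and rule out a nonzero commutative summand, which is where semiprimitivity (via $\mathrm{rad}(A)=0$ and a Wedderburn-type or primitive-ideal separation argument) enters, and one must check continuity of $\gamma$. Verifying the centroid identities and the well-definedness simultaneously is the delicate core; the rest is routine.
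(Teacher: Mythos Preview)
Your proposal has a genuine gap at the step you gloss over most quickly: showing that $[x,y]=0$ forces $[f(x),y]=0$. From the polarized identity you only get $[f(x),y]=[x,f(y)]$ when $[x,y]=0$; there is no evident reason this common value should vanish. Without that vanishing, the zLpd hypothesis does not apply to $(x,y)\mapsto [f(x),y]$, and your map $\psi$ never gets off the ground. This is not a minor technicality --- it is the heart of the proof, and it requires two nontrivial ingredients you do not mention: an algebraic identity (from \cite[Lemma 3.1]{BZ}) valid for any commuting linear map, namely
\[
\bigl[[w,z],\,[u,\,f([x,y])-[x,f(y)]]\bigr]=0,
\]
which together with semiprimeness yields $f([x,y])-[x,f(y)]\in Z(A)$ for all $x,y$; and then the Kleinecke--Shirokov theorem, which in a semiprimitive Banach algebra forces a commutator lying in the center to be zero. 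Only after these steps does one know that $[x,y]=0\Rightarrow [x,f(y)]=0$.

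The paper then applies zLpd not to $(x,y)\mapsto [f(x),y]$ but to the $Z(A)$-valued map $(x,y)\mapsto f([x,y])-[x,f(y)]$, obtaining $\mu:[A,A]\to Z(A)$ directly. This sidesteps your ``extension'' worry entirely: one extends $\mu$ arbitrarily to $A$ (linearly, into $Z(A)$) and sets $\gamma=f-\mu$; the centroid identities for $\gamma$ are then a one-line check. So you have also misidentified the main obstacle: the extension is routine once the vanishing condition is in hand, whereas your sketch treats the vanishing as automatic and the extension as delicate.
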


\begin{proof} We begin the proof of the nontrivial ``only if'' part by invoking \cite[Lemma 3.1]{BZ} which tells us that a commuting linear map $f$ on any algebra $A$ satisfies 
\begin{equation}\label{e}\big[[w,z], [u, f([x,y]) - [x, f(y)]]\big]=0 \quad\mbox{for all $x,y,z,u,w\in A$.}\end{equation}
Our proof heavily depends on this identity.

Since $A$ is semiprimitive, and hence semiprime, we may use \cite[Lemma 1.1.9]{H2} which states that the following implication holds for every $a\in A$:
\begin{equation}\label{hd}
[[a,x],a]=0\,\,\,\mbox{for all $x\in A$}\implies a\in Z(A).\end{equation}
Therefore, it follows from \eqref{e} that 
 $$[u, f([x,y]) - [x, f(y)]]\in Z(A)\quad\mbox{ for all $x,y,u\in L$.}$$ 
Hence, using \eqref{hd} once again we arrive at
$$f([x,y]) - [x, f(y)] \in Z(A)\quad\mbox{ for all $x,y\in L$.}$$ 
Thus, if  $x,y\in A$ are such that $[x,y]=0$, then $[x,f(y)]\in Z(A)$. However, a commutator lying in the center of a  semiprimitive Banach algebra must be $0$; indeed,
this follows from the Kleinecke-Shirokov (which states that if Banach algebra elements $a,b$ satisfy $[a,[a,b]]=0$, then  $[a,b]$ is quasi-nilpotent) along with the fact that
the spectral radius is submultiplicative on
commuting elements.
 Accordingly, $$(x,y)\mapsto f([x,y]) - [x, f(y)]$$ is a continuous bilinear map from $A\times A$ to $Z(A)$
for which the assumption that $A$ is zLpd  can be used.
Therefore, there exists a continuous linear map $\mu: [A,A]\to Z(A)$ such that
\begin{equation}\label{f} f([x,y]) - [x, f(y)] = \mu([x,y])\quad\mbox{for all $x,y\in A$.}\end{equation}
Extend $\mu$ to a linear map from $A$ to $Z(A)$ (which we also denote by $\mu$), and
define $\gamma:A\to A$ by
$$\gamma= f- \mu.$$
From \eqref{f} it follows that $\gamma([x,y])= [x,\gamma(y)]$. This already shows that $\gamma\in {\rm Cent}(A^-)$ since 
$\gamma([x,y])= -\gamma([y,x])= -[y,\gamma(x)] = [\gamma(x),y]$.
\end{proof}

Two questions immediately present itself. The first one is whether $\gamma$ and $\mu$ can be chosen to be continuous, and the second one is whether $\gamma$ can be described more explicitly. However, we will not go into this here. We remark that the $C^*$-algebra case is resolved in \cite[Section 6.2]{AM}.

\section{Commutativity preserving derivations}\label{s4}

This section is devoted to showing that, under relatively mild assumptions, nonzero derivations do not preserve commutativity, i.e., 
they do not map all commuting pairs of elements into commuting pairs. Our main result reads as follows.

\begin{theorem}\label{td}
Let $d$ be a nonzero derivation of a Banach algebra $A$.  Suppose that $A$ is zLpd, semiprimitive, and that there exists a unital ring $Q$ such that $A$ is a $3$-free subset of $Q$. Then there exist $x,y\in A$ such that $[x,y]=0$ and $[d(x),d(y)]\ne 0$.
\end{theorem}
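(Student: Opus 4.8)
The plan is to argue by contradiction: assume $d$ preserves commutativity, i.e. $[x,y]=0 \implies [d(x),d(y)]=0$ for all $x,y\in A$, and derive that $d=0$. The first step is to produce a functional identity from this hypothesis. The natural move is to linearize: the map $\varphi(x,y) = [d(x),d(y)]$ is a continuous bilinear map from $A\times A$ into the Banach space $A$ which vanishes whenever $[x,y]=0$. Since $A$ is zLpd, there is a continuous linear map $\omega : [A,A]\to A$ with $[d(x),d(y)] = \omega([x,y])$ for all $x,y\in A$. Extending $\omega$ to all of $A$ (still denoted $\omega$), we obtain the identity
\begin{equation}\label{cpd-fi}
[d(x),d(y)] = \omega([x,y])\quad\text{for all }x,y\in A.
\end{equation}
This is the key structural equation, and everything afterward is an algebraic analysis of \eqref{cpd-fi} using $3$-freeness.

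Next I would expand \eqref{cpd-fi} using the Leibniz rule. Applying $d$ to a product and using that $d$ is a derivation, one rewrites $[d(x),d(y)]$ by substituting suitable products for $x$ and $y$ — for instance replacing $y$ by $xy$ (or $yx$) and using $d(xy)=d(x)y + xd(y)$ — so that \eqref{cpd-fi} becomes an identity of the form $\sum E_i(\ldots)x_i + \sum x_j F_j(\ldots) = 0$ (or lies in the center) in at most three variables, with the coefficient maps built from $d$, $\omega$, and left/right multiplications. Because $A$ is a $3$-free subset of $Q$, Definition \ref{def1} forces these coefficient maps to have the standard form \eqref{3S3}. Reading off the resulting equations should express $d$ and $\omega$ in terms of multiplications by fixed elements of $Q$ and a central-valued map; concretely, I expect to learn that $d$ is, up to the central part, an inner-type map governed by a single element, and that $\omega$ is correspondingly constrained.

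The final step is to feed this extra structure back into \eqref{cpd-fi} and into the hypothesis that $d$ preserves commutativity, and to exploit semiprimitivity to kill the degenerate possibilities. Here I would use the Kleinecke–Shirokov theorem together with submultiplicativity of the spectral radius on commuting elements (exactly as in the proof of Theorem \ref{tco}) to conclude that commutators forced into the center actually vanish, and I would use that a semiprimitive Banach algebra is semiprime to rule out nilpotent-ideal obstructions. Combining the functional-identity conclusion with these analytic facts should leave only $d=0$ as a consistent outcome, contradicting $d\neq 0$.

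The main obstacle I anticipate is the bookkeeping in the middle step: choosing the right substitutions so that \eqref{cpd-fi} genuinely collapses to a $3$-free functional identity (rather than one requiring $4$-freeness or more), and then correctly solving that identity — in particular tracking the central terms $\lambda_k$ and the maps $p_{ij}$, and checking that the solution one extracts is strong enough to pin $d$ down completely. A secondary subtlety is that $\omega$ a priori is only defined on $[A,A]$ and its extension is not canonical, so one must be careful that the conclusions drawn do not depend on the choice of extension (they should not, since all the identities ultimately only involve $\omega$ evaluated on commutators).
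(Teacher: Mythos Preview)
Your high-level strategy---contradiction, then apply zLpd to produce a linear relation, then extract a functional identity and invoke $3$-freeness, then finish analytically using semiprimitivity---matches the paper's. But the proposal is vague precisely where the real work lies, and the specific moves you suggest at those points are not the ones that actually succeed.

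First, the substitution you propose (``replacing $y$ by $xy$'' in $[d(x),d(y)]=\omega([x,y])$) produces terms such as $d(x)yd(z)$ sitting in the \emph{middle}, which are not of the form $E_i(\ov{x}_m^i)x_i$ or $x_jF_j(\ov{x}_m^j)$ required by Definition~\ref{def1}; so the basic FI machinery does not apply directly. The paper sidesteps this by passing to $d^2$: from $d^2([x,y])=[d^2(x),y]+2[d(x),d(y)]+[x,d^2(y)]$ one sees that $[x,y]=0$ forces $[d^2(x),y]+[x,d^2(y)]=0$, and zLpd gives $[d^2(x),y]+[x,d^2(y)]=\rho([x,y])$. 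Now one variable in each bracket is bare, so cycling via $[xy,z]+[zx,y]+[yz,x]=0$ yields an identity of the correct shape \eqref{eee}, to which $3$-freeness genuinely applies.

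Second, your expectation that the FI step will show ``$d$ is, up to the central part, an inner-type map'' is not what comes out. What the paper actually extracts (after several further FI reductions, including a second application of $3$-freeness and one of $2$-freeness) is that $d(x)yd(x)$ commutes with $x$ for all $x,y$; from this one shows $[d(x),x]A[d(x),x]A[d(x),x]=0$, so $[d(x),x]$ generates a nilpotent ideal and hence $[d(x),x]=0$ by semiprimitivity.

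Third, the endgame is not Kleinecke--Shirokov. Having only $[d(x),x]=0$, the paper invokes Sinclair's theorem that a continuous derivation leaves every primitive ideal invariant, passes to the primitive quotient $A/P$, and then applies Posner's theorem (a commuting derivation of a prime ring is zero) to conclude $d_P=0$ for every primitive $P$, whence $d=0$. Neither of these two ingredients appears in your plan, and it is not clear how Kleinecke--Shirokov alone would close the gap from ``$d$ is commuting'' to ``$d=0$''.
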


\begin{proof} Suppose the theorem is not true. 
From
$$[d^2(x),y] +[x,d^2(y)] = d^2([x,y]) - 2[d(x),d(y)]$$
we see that then $[x,y]=0$ implies $[d^2(x),y] + [x,d^2(y)] =0$. Since $d$ is continuous \cite{JS} and $A$ is zLpd, there exist a continuous linear map $\rho:A\to A$ such that
$$[d^2(x),y] + [x,d^2(y)]=\rho([x,y])$$
for all $x,y\in A$. Using $[xy,z]+ [zx,y] + [yz,x]=0$ it follows that 
\begin{align*}&[d^2(xy),z] + [d^2(zx),y] + [d^2(yz),x]\\
 +& [xy,d^2(z)] + [zx,d^2(y)] + [yz,d^2(x)]=0\end{align*}
for all $x,y,z\in A$.  That is,
\begin{equation} \label{eee}E(x,y)z + E(z,x)y + E(y,z)x + zF(x,y) + yF(z,x) + xF(y,z)=0,\end{equation}
where 
\begin{equation} \label{fff}E(x,y)= d^2(xy) - d^2(x)y\quad\mbox{and}\quad F(x,y)=xd^2(y) - d^2(xy).\end{equation}
As $A$ is a $3$-free subset of $Q$, 
it follows in particular that there exist maps $p,q:A\to Q$ and $\lambda:A\times A \to Z(Q)$ such that
$$E(x,y)=xp(y)  + yq(x) + \lambda(x,y)$$
for all $x,y\in A$. Using this in \eqref{eee} we obtain 
$$xG(y,z) + yG(z,x) + zG(x,y)=0$$
for all $x,y,z\in A$, where
$$G(x,y)= F(x,y)+p(x)y + q(y)x + \lambda(x,y).$$
Again using the assumption that  $A$ is a $3$-free subset of $Q$ we get $G=0$, i.e.,
$$ F(x,y)=-p(x)y - q(y)x - \lambda(x,y)
$$ 
for all $x,y\in A$.

Applying the derivation law we see from \eqref{fff} that 
$$E(x,y)= 2d(x)d(y) + xd^2(y)\quad\mbox{and}\quad F(x,y)=-2d(x)d(y)-d^2(x)y ,
$$
and hence
\begin{equation}\label{d1}
2d(x)d(y) + xd^2(y) = xp(y)  + yq(x) + \lambda(x,y)\end{equation}
and
\begin{equation}\label{d2}
2d(x)d(y)+d^2(x)y= p(x)y +q(y)x +\lambda(x,y)
\end{equation}
for all $x,y\in A$. 
Replacing $y$ by $yz$ in \eqref{d2} we obtain
$$2d(x)d(y)z + 2d(x)yd(z) + d^2(x)yz = p(x)yz + q(yz)x + \lambda(x,yz),$$
and so, using \eqref{d2} again, we obtain
\begin{equation}\label{d2a}
2d(x)yd(z)+q(y)xz+\lambda(x,y)z= q(yz)x +\lambda(x,yz).
\end{equation}
Similarly, replacing $x$ by $zx$ in \eqref{d1} we obtain
\begin{equation*}
2d(z)xd(y)+zyq(x)+\lambda(x,y)z= yq(zx) +\lambda(zx,y).
\end{equation*}
By changing the variable notation, we can write this as 
\begin{equation}
\label{d1a}
2d(x)yd(z)+xzq(y)+\lambda(y,z)x= zq(xy) +\lambda(xy,z).
\end{equation}
Comparing \eqref{d2a} and \eqref{d1a} we arrive at
\begin{align*}&\big(q(yz) + \lambda(y,z)\big)x - \big(q(y)x\big)z  + x\big(zq(y)\big)- z\big(q(xy) + \lambda(x,y)\big) \\
=&\lambda(xy,z)- \lambda(x,yz)\in Z(Q).\end{align*}
For any fixed $y$, we can intepret this as a functional identity for which the $3$-freeness assumption is applicable.  Hence, in particular, for any $y$ there exists an $r_y\in Q$ such that
$$zq(y) - r_yz \in Z(Q)$$ for all $z\in Z(Q)$. Now, since $A$ is also $2$-free in $Q$ it follows that 
$q(y)=r_y\in Z(Q)$
for any $y\in A$. From \eqref{d1a} we now see that $d(x)yd(x)$ commutes with $x$ for all $x,y\in A$, i.e.,
$$d(x)yd(x)x=xd(x)yd(x).$$
Consequently, for all $x,y,z\in A$,
\begin{align*}
d(x)y\big((xd(x)zd(x)\big) &= d(x)\big(yd(x)z\big)d(x)x
\\ =& \big(xd(x)yd(x)\big)zd(x) = d(x)yd(x)xzd(x).
\end{align*}
That is,
$$d(x)A[d(x),x] A d(x) =\{0\}$$
and hence also 
$$[d(x),x] A [d(x),x] A [d(x),x] =0$$
for all $x\in A$. This means that, for any $x\in A$, $[d(x),x]$ generates a nilpotent ideal in $A$. Since $A$ is semiprimitive, the only possibility is that $$[d(x),x]=0$$
for all $x\in A$. Now, since $d$ preserves any primitive ideal $P$ of $A$ \cite{S}, $d$ induces a derivation $d_P$ of the Banach algebra $A/P$ given by $d_P(x + P) =d(x)+P$. Of course, $d_P$ also satisfies $[d_P(u),u]=0$ for all $u\in A/P$. Since $A/P$ is a primitive and hence a prime algebra, we may use  the well-known  Posner's theorem \cite{P}
stating that  a derivation $\delta$ of a prime ring $R$ must be $0$ if it satisfies $[\delta(u),u]=0$ for all $u\in R$. Thus,
 $d_P=0$. Since $P$ is an arbitrary primitive ideal and $A$ is semiprimitive, this leads to a contradiction that $d=0$.
\end{proof}

As pointed out in the introduction, $C^*$-algebras are zLpd. Of course, they are also semiprimitive. 
With reference to Section \ref{s2}, we now state two corollaries of Theorem \ref{td}. 

\begin{corollary}\label{td1} Let $A$ be a prime $C^*$-algebra. 
The following conditions are equivalent:
\begin{enumerate}
\item[{\rm (i)}] There exists a nonzero derivation $d$ of $A$ such that $d(x)$ and $d(y)$ commute whenever $x$ and $y$ commute.
\item[{\rm (ii)}] $\dim A\ne 1$  and every derivation $d$ of $A$ has the property  that $d(x)$ and $d(y)$ commute whenever $x$ and $y$ commute.
\item[{\rm (iii)}]
  $A\cong M_2(\mathbb C)$.
\end{enumerate}
\end{corollary}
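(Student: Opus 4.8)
The plan is to prove Corollary \ref{td1} by establishing the cycle of implications (iii) $\Rightarrow$ (ii) $\Rightarrow$ (i) $\Rightarrow$ (iii). The first two implications are comparatively cheap. For (iii) $\Rightarrow$ (ii): if $A\cong M_2(\mathbb C)$ then $\dim A = 4 \ne 1$, and every derivation of $M_2(\mathbb C)$ is inner, say $d = [a,\,\cdot\,]$; one checks directly that for any $x,y$ with $[x,y]=0$ the elements $d(x) = [a,x]$ and $d(y) = [a,y]$ commute. The cleanest argument I would give uses the fact that any two commuting matrices in $M_2(\mathbb C)$ lie in a common maximal commutative subalgebra, and that $M_2(\mathbb C)$ has the feature (peculiar to $2\times2$) that $[[a,x],[a,y]]$ vanishes whenever $[x,y]=0$ — this is exactly the assertion made informally in the introduction of the paper, and it can be verified by a short explicit computation after reducing $x$ to a diagonal (or Jordan) form. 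The implication (ii) $\Rightarrow$ (i) is immediate: a nonzero derivation of $A\cong M_2(\mathbb C)$ exists (inner derivations by non-scalar matrices are nonzero), and by (ii) it preserves commutativity.

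The substantial implication is (i) $\Rightarrow$ (iii). Here I would invoke Theorem \ref{td}. A prime $C^*$-algebra is zLpd (stated in the introduction as holding for all $C^*$-algebras) and semiprimitive (all $C^*$-algebras are). So the only hypothesis of Theorem \ref{td} that needs attention is that $A$ be a $3$-free subset of some unital ring $Q$. The natural candidate is $Q = Q_{ml}(A)$, the maximal left ring of quotients. By the discussion in Section \ref{s2}, a prime $C^*$-algebra $A$ is a $3$-free subset of $Q_{ml}(A)$ if and only if $A$ is not isomorphic to $M_k(\mathbb C)$ for $k < 3$, i.e., $A\not\cong \mathbb C$ and $A\not\cong M_2(\mathbb C)$. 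Now argue by contradiction: suppose (i) holds but $A\not\cong M_2(\mathbb C)$. Split into two cases. If $\dim A = 1$, i.e., $A\cong\mathbb C$, then every derivation of $A$ is zero (a derivation of a commutative algebra generated by $1$, or simply: $\mathbb C$ has no nonzero derivations), contradicting the existence of a \emph{nonzero} $d$ in (i). If $\dim A \ne 1$ and $A\not\cong M_2(\mathbb C)$, then $A$ is a prime $C^*$-algebra not isomorphic to $M_k(\mathbb C)$ for any $k\le 2$, hence $A$ is a $3$-free subset of $Q_{ml}(A)$, so all the hypotheses of Theorem \ref{td} are met, and the theorem produces $x,y\in A$ with $[x,y]=0$ but $[d(x),d(y)]\ne 0$ — directly contradicting the commutativity-preserving property in (i). Either way we reach a contradiction, so $A\cong M_2(\mathbb C)$.

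The main obstacle, such as it is, is bookkeeping rather than depth: one must be careful that ``prime $C^*$-algebra'' together with ``$\dim A\ne 1$, $A\not\cong M_2(\mathbb C)$'' really does give $3$-freeness in $Q_{ml}(A)$, which requires knowing that a prime $C^*$-algebra satisfying a polynomial identity is forced by Posner's theorem and the structure of finite-dimensional $C^*$-algebras to be some $M_k(\mathbb C)$ — this is precisely the assertion recalled in Section \ref{s2}, so it may be cited rather than reproved. A secondary point to handle cleanly is the claim in (iii) $\Rightarrow$ (ii) that \emph{every} derivation of $M_2(\mathbb C)$ preserves commutativity; I would dispose of it by noting all such derivations are inner and reducing the verification of $[[a,x],[a,y]]=0$ under $[x,y]=0$ to the case where $x$ is in Jordan normal form, where it is a one-line matrix computation. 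No genuinely hard analysis is needed; the whole content is the application of Theorem \ref{td} with the right choice of $Q$.
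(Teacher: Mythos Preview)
Your approach is essentially the paper's: the same cycle of implications, the same appeal to Theorem~\ref{td} via $3$-freeness of $A$ in $Q_{ml}(A)$, and the same handling of the $k=1$ case. Two small points are worth noting. In (ii) $\Rightarrow$ (i) you write ``a nonzero derivation of $A\cong M_2(\mathbb C)$ exists,'' but at that step you are assuming only (ii), not (iii), so you cannot yet use $A\cong M_2(\mathbb C)$; the paper's argument is simply that a prime $C^*$-algebra of dimension greater than $1$ is noncommutative, whence any noncentral element gives a nonzero inner derivation. For (iii) $\Rightarrow$ (ii), the paper's argument is slightly cleaner than a Jordan-form computation: if $x\in M_2(\mathbb C)$ is nonscalar and $[x,y]=0$, then $y$ must be a linear combination of $x$ and $1$, so $d(y)$ is a scalar multiple of $d(x)$ and the two are in fact linearly dependent (and if $x$ is scalar then $d(x)=0$).
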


\begin{proof}
(i)$\implies$(iii). Theorem \ref{td} shows that (i) is fulfilled only when $A\cong M_k(\mathbb C)$ with $k < 3$. Since there are no nonzero derivations on $A$ if  $k=1$,
the only possibility is that $k=2$. 

(iii)$\implies$(ii). Note that two matrices $x,y\in M_2(\mathbb C)$, with $x$ nonscalar, commute only when $y$ is a linear combination of $x$ and the identity matrix. Since
every derivation $d$ of $M_2(\mathbb C)$ is inner, this readily implies that $d(x)$ and $d(y)$ commute (in fact, they are linearly dependent) whenever $x$ and $y$ commute.

(ii)$\implies$(i). Since $A$ is prime and of dimension greater than $1$, it is not commutative. Every noncentral element obviously gives rise to a nonzero inner derivation which, by assumption, preserves commutativity.
\end{proof}

Since  von Neumann algebras have only inner derivations, we state our next corollary as follows.

\begin{corollary}\label{td2} 
Let $A$ be a von Neumann algebra with no central summands of type  $I_2$. If
$a\in A$ is such that $[a,x]$ and $[a,y]$ commute whenever $x$ and $y$ commute, then 
$a\in Z(A)$.
\end{corollary}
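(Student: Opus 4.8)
The plan is to reduce Corollary \ref{td2} to Theorem \ref{td}. The hypothesis concerns the inner derivation $d = \operatorname{ad}_a$ defined by $d(x) = [a,x]$: the assumption says precisely that $d$ preserves commutativity, i.e. $[x,y]=0$ implies $[d(x),d(y)]=0$. I want to conclude $a \in Z(A)$, which is exactly the statement $d = 0$. So the heart of the matter is: a von Neumann algebra with no central summand of type $I_2$ satisfies the hypotheses of Theorem \ref{td}, and therefore cannot carry a nonzero commutativity-preserving derivation.

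First I would verify the three structural hypotheses of Theorem \ref{td} for such an $A$. That $A$ is a Banach algebra which is zLpd is recorded in the introduction (von Neumann algebras, being $C^*$-algebras, are zLpd), and semiprimitivity is automatic for $C^*$-algebras. The only point requiring the type hypothesis is $3$-freeness: by the result quoted in Section \ref{s2} (\cite[Theorem 3.2 and Lemma 3.12]{ABV}), a von Neumann algebra is $d$-free provided it has no central summands of type $I_1, \dots, I_{d-1}$. For $d = 3$ this demands no central summands of type $I_1$ or $I_2$. So there is a small gap to bridge: the corollary only excludes type $I_2$, not type $I_1$. A central summand of type $I_1$ is a commutative von Neumann algebra, on which every inner derivation vanishes identically and the conclusion $a \in Z$ is trivially true on that summand. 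Hence I would split $A = A_1 \oplus A_2$ where $A_1$ is the (possibly zero) type $I_1$ central summand and $A_2$ has no central summands of type $I_1$ or $I_2$; on $A_1$ the component of $a$ is automatically central, and on $A_2$ Theorem \ref{td} applies directly.

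Second, I would carry out the application of Theorem \ref{td} to $A_2$. The derivation $\operatorname{ad}_{a_2}$ on $A_2$ (where $a_2$ is the $A_2$-component of $a$) is nonzero unless $a_2 \in Z(A_2)$; if it were nonzero, Theorem \ref{td} would produce $x,y \in A_2$ with $[x,y] = 0$ but $[[a_2,x],[a_2,y]] \ne 0$. But the hypothesis of the corollary, restricted to elements of the summand $A_2$, says exactly that no such pair exists (commutativity in $A$ restricted to $A_2$ coincides with commutativity in $A_2$, and likewise for the brackets). This contradiction forces $\operatorname{ad}_{a_2} = 0$, i.e. $a_2 \in Z(A_2)$. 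Combined with $a_1 \in Z(A_1)$ from the trivial summand, we get $a \in Z(A_1) \oplus Z(A_2) = Z(A)$.

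I expect the main obstacle to be purely bookkeeping rather than conceptual: making sure the reduction to summands is clean, in particular that ``$[a,x]$ and $[a,y]$ commute whenever $x$ and $y$ commute'' genuinely restricts to each central summand. This is straightforward because central summands are ideals and the relevant products and commutators are computed componentwise, but it does need to be stated. A secondary point worth a sentence is the invocation that every derivation of a von Neumann algebra is inner (Sakai), which is why it suffices to treat $d = \operatorname{ad}_a$; this is already flagged in the paragraph preceding the corollary. With these observations in place the proof is essentially a one-line appeal to Theorem \ref{td} on the non-trivial summand.
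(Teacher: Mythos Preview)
Your proposal is correct and follows the paper's intended route: the paper states the corollary without proof, treating it as an immediate application of Theorem~\ref{td} to the inner derivation $d=\operatorname{ad}_a$. You have in fact been more careful than the paper, explicitly isolating and disposing of the type $I_1$ summand so that the $3$-freeness hypothesis of Theorem~\ref{td} is genuinely available on the remaining piece; the paper leaves this reduction implicit.
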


\section{Commutativity preserving bijective linear maps} \label{s5}

Obvious examples of bijective commutativity preserving linear maps are isomorphisms and antiisomorphisms.
More generally, so are direct sums of isomorphisms and antiisomorphisms---we say that a bijective linear map $\sigma:A\to B$ is the {\em direct sum of an isomorphism and an antiisomorphism} if $A$ is the direct sum of two ideals $I$  and $J$, $B$ is the direct sum of two ideals $I'$ and $J'$, the restriction of $\sigma$ to $I$ is an isomorphism from
$I$ onto $I'$, and the restriction of $\sigma$ to $J$ is an antiisomorphism from
$J$ onto $J'$.
Further, note that if $\sigma:A\to B$ is any commutativity preserving linear map, $\lambda\in Z(B)$  and $\tau:A\to Z(B)$ is any linear map, then the map \begin{equation}\label{thr}\theta(x)= \lambda \sigma(x)+\tau(x)\end{equation} also preserves commutativity. One usually tries to prove that a bijective linear commutativity preserving map is of the form \eqref{thr}, with $\sigma$ being the direct sum of an isomorphism and an antiisomorphism. Note that such a map preserves commutativity in both directions (i.e.,
both $\theta$ and $\theta^{-1}$ preserve commutativity), provided that $\lambda$ is not a zero divisor.
We now give an example of a bijective linear map that preserves commutativity in one direction only.

\begin{example}
Let $A$ be an algebra and let $V$ be a subspace of $A$ such that $A=V\oplus Z(A)$. Let $\gamma:Z(A)\to Z(A)$ be an injective linear map such that its range $Z_0$ is a proper subspace of $Z(A)$. Pick a subspace $Z_1$ of $Z(A)$ such that $Z(A)=Z_0\oplus Z_1$, and let $\delta$ be a bijective linear map from some  algebra $J$ onto $Z_1$. Define $\theta:A\times J\to A$ by $$\theta(v + z,u) = v + \gamma(z) + \delta(u)$$ for all $v\in V$,  $z\in Z(A)$, $u\in J$. It is easy to check  that $\theta$ is a bijective linear map that preserves commutativity. If $J$ is not commutative,  then $\theta^{-1}$
does not preserve commutativity, and hence $\theta$ is not
 of the form \eqref{thr}.
\end{example}

This example justifies the conclusion of  Theorem \ref{t}\,(a), while the next, simple and well-known  example justifies the $3$-freeness assumption. 

\begin{example}
It is easy to check that every linear map from $M_2(\mathbb C)$ to another algebra $B$ that sends the identity matrix to a central element in $B$ preserves commutativity. Of course, such a map  may not be of the standard form \eqref{thr}.
\end{example}

It is well-known that functional identities are applicable to the study of commutativity preservers. So far, the  results obtained by this approach used at least one of the following two assumptions:  
the center of the target algebra is a field (see \cite[Section 7.1]{FIbook}) or the commutativity is preserved in both directions (like in \cite{BM}). 
In the next theorem, we will be able to avoid these assumptions, however by adding the assumption that the first Banach algebra is zLpd. It should be mentioned that 
parts of the proof will be almost identical to parts of the proofs of \cite[Theorem 6.1]{FIbook} and
\cite[Theorem 3]{BM}, but there will  also be  entirely different parts.

\begin{theorem}\label{t}
Let $A$ and $B$ be unital Banach algebras. Assume that $A$ is semiprime and zLpd, and that both $A$ and $B$ are $3$-free.
If $\theta:A\to B$ is a bijective  continuous commutativity preserving linear map (i.e., $[x,y]=0$ implies $[\theta(x),\theta(y)]=0$), then one of the following statements holds:
\begin{enumerate}
\item[{\rm (a)}] 
$A$ contains a noncentral ideal $J$ such that $\theta(J) \subseteq Z(B)$ (so $\theta$ preserves commutativity in one direction only).
 \item[{\rm (b)}] 
$\theta$ is of the form \eqref{thr} with $\lambda$  an invertible element in $Z(B)$, $\tau$  a linear map from $A$ to $Z(B)$, and 
$\sigma:A\to B$ the direct sum  of an isomorphism and an antiisomorphism (so  $\theta$ preserves commutativity in both directions). Moreover, $\sigma$ and $\tau$ are continuous.
\end{enumerate}
\end{theorem}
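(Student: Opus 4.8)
The plan is to linearize the commutativity‑preserving hypothesis into a functional identity, feed it through the $3$‑freeness of $A$ to extract a structural description of $\theta$, and then split into the two cases according to whether a certain map lands in the center. First I would record the basic consequence of the hypothesis: if $[x,y]=0$ then $[\theta(x),\theta(y)]=0$. Using this with $y=x^2$ (which commutes with $x$) and polarizing, one obtains that the continuous bilinear map $(x,y)\mapsto [\theta(x^2),\theta(y)]+[\theta(x),\theta(\text{sym})]$, suitably arranged, vanishes on commuting pairs; more directly, replacing $y$ by $xy+yx$ and $x$ by itself one gets a bilinear map killed by $[x,y]=0$. Since $A$ is zLpd, every such map factors through $[x,y]$, so there is a continuous linear $\psi\colon[A,A]\to B$ with $[\theta(x),\theta(x^2)] = \psi([x,x^2])=0$ and, after full polarization, an identity of the shape
\begin{equation*}
[\theta(x),\theta(xy+yx)] + [\theta(y),\theta(x^2)] = \Psi(x,y)
\end{equation*}
where $\Psi$ is expressible via $\theta$ composed with a map out of $[A,A]$. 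The point of invoking zLpd here is exactly to convert a ``holds when $[x,y]=0$'' statement about $\theta$ into an everywhere‑valid functional identity; this is the step the whole argument hinges on and where the hypothesis that $A$ is zLpd (not merely prime) does real work, in contrast to \cite[Theorem 6.1]{FIbook}.

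Next I would exploit $3$‑freeness of $A$. The identity above, read in $B$ with $\theta$ applied, is not yet a functional identity in $A$; to get one, compose with $\theta^{-1}$ where legitimate, or better, work with the bilinear map $B\colon A\times A\to B$, $B(x,y)=\theta(xy)-\theta(x)\theta(y)\lambda^{-1}$‑type expression — but since we do not yet know $\lambda$, the cleaner route is the classical one: from $[\theta(x^2),\theta(y)]=[\theta(x),\theta(xy+yx)]-[\theta(x),\theta(yx+xy)]$‑bookkeeping one derives, exactly as in \cite[Theorem 6.1]{FIbook} and \cite[Theorem 3]{BM}, that $\theta(x^2)$ is ``almost'' a quadratic form controlled by $\theta(x)$, and linearizing $x\mapsto x+z$ produces a functional identity in three variables of the form $\sum E_i(\ldots)x_i + \sum x_jF_j(\ldots)\in Z(\cdot)$ to which $3$‑freeness of $A$ (and then of $B$) applies. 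The standard‑solution conclusion yields linear maps and a $Z(B)$‑valued map such that $\theta(xy)+\theta(yx) = \theta(x)\cdot p(y) + q(x)\cdot\theta(y) + \mu(x,y)$ with $\mu$ central; symmetry and a further pass force $p,q$ to be governed by a single invertible central element $\lambda$ together with a central‑valued correction, giving $\theta(x\circ y)=\lambda^{-1}(\theta(x)\theta(y)+\theta(y)\theta(x)) + (\text{central})$. Setting $\sigma=\lambda^{-1}(\theta-\tau)$ for the appropriate central $\tau$, this says $\sigma$ preserves the Jordan product up to nothing, i.e. $\sigma$ is a Jordan homomorphism that is bijective.

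The endgame is then to upgrade the Jordan isomorphism $\sigma\colon A\to B$ to a direct sum of an isomorphism and an antiisomorphism, which is Herstein's classical theorem for Jordan homomorphisms onto a prime (or, here, $3$‑free, hence containing no nilpotent ideals) ring; $3$‑freeness of $B$ guarantees $B$ has no nonzero nilpotent ideals, so the standard argument that the two ``directions'' of the Jordan identity split $A$ into ideals $I,J$ with $\sigma|_I$ multiplicative and $\sigma|_J$ antimultiplicative goes through. Continuity of $\sigma$, hence of $\tau=\theta-\lambda\sigma$, follows since $\sigma$ is a bijective homomorphism/antihomomorphism on each summand and such maps between Banach algebras with the relevant semisimplicity are automatically continuous, or more elementarily because $\sigma$ was built from $\theta$ and a fixed central element. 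This dichotomy — (b) when $\lambda$ can be taken invertible — degenerates to case (a) precisely when the above forces a noncentral ideal $J$ of $A$ with $p,q$ annihilating it in a way that pushes $\theta(J)$ into $Z(B)$: concretely, if the central element that ``should'' be $\lambda$ is a zero divisor, its annihilator ideal in $B$ pulls back under $\theta^{-1}$ to a noncentral ideal of $A$ mapped into $Z(B)$. I expect the main obstacle to be the bookkeeping in the second paragraph: organizing the several substitutions so that the resulting expression is genuinely of the form \eqref{3S1}–\eqref{3S2} with index sets of size $\le 3$, and then propagating the standard‑solution data through the symmetrization to isolate a single $\lambda$ — this is where the proof most closely shadows, yet must deviate from, \cite[Theorem 6.1]{FIbook} and \cite[Theorem 3]{BM}, since there $Z$ was a field or two‑sided preservation was available.
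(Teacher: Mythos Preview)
Your outline has the right opening move---zLpd gives a continuous linear $\rho$ with $[\theta(x),\theta(y)]=\rho([x,y])$, and from there one feeds a derived identity into $d$-freeness---but several of the subsequent steps do not go through as written.

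First, the functional identity you obtain lives in $B$, not in $A$: from $[xy,z]+[zx,y]+[yz,x]=0$ one gets $[\theta(xy),\theta(z)]+[\theta(zx),\theta(y)]+[\theta(yz),\theta(x)]=0$, and it is the $3$-freeness of $B$ (not $A$) that yields the structural form
\[
\theta(xy)=\lambda_1\theta(x)\theta(y)+\lambda_2\theta(y)\theta(x)+\mu(x)\theta(y)+\mu(y)\theta(x)+\nu(x,y).
\]
The $3$-freeness of $A$ enters only much later, when one analyses $\theta^{-1}$ to prove that $\lambda_1+\lambda_2$ is invertible. Your proposal has the roles of the two $3$-freeness hypotheses reversed.

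Second, and more seriously, your Jordan route---passing to $\theta(x\circ y)$ and aiming for a Jordan isomorphism $\sigma$---is precisely the classical approach that the paper explains does \emph{not} suffice here: it works when $Z(B)$ is a field or when commutativity is preserved in both directions, neither of which is assumed. By symmetrizing you lose the separate coefficients $\lambda_1,\lambda_2$ and with them the crucial relation $\lambda_1\lambda_2=0$, which is what actually produces the central idempotent $f=\lambda_1(\lambda_1+\lambda_2)^{-1}$ splitting $B$ and hence the direct-sum decomposition. Herstein's theorem on Jordan homomorphisms onto prime rings is not available, since $B$ is merely $3$-free, and ``$3$-free, hence no nilpotent ideals'' does not give you the decomposition you need.

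Third, your proposed dichotomy---case (a) arising from an annihilator ideal of a zero-divisor $\lambda$---is not the mechanism. The split comes instead from Herstein's Lie-ideal theorem: $L=\theta^{-1}(Z(B))$ is a Lie ideal of the semiprime algebra $A$ (this uses \eqref{e1b} and its Lie consequence), so either $L\subseteq Z(A)$, giving \eqref{e3} and leading to (b), or $L$ contains a noncentral ideal $J$, which is exactly (a). Only after one is in the branch $L=Z(A)$ does one establish invertibility of $\lambda_1+\lambda_2$, by running the commuting-map machinery on $\theta^{-1}$ (this is where $3$-freeness of $A$ finally earns its keep).
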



\begin{proof}
Since $A$ is zLpd, there exists a  continuous linear map $\rho:A\to B$ satisfying $$[\theta(x),\theta(y)]=\rho([x,y])$$ for all $x,y\in A$. 
Using $[xy,z] + [zx,y] + [yz,x]=0$ it follows that
\begin{equation}\label{e1a}[\theta(xy),\theta(z)] +  [\theta(zx),\theta(y)]+ [\theta(yz),\theta(x)] =0\end{equation}
for all $x,y,z\in A$. Since $B$ is $3$-free,
we may use \cite[Theorem 4.13]{FIbook} to conclude that there exist elements $\lambda_1,\lambda_2\in Z(B)$ and maps $\mu_1,\mu_2:A\to Z(B)$, $\nu:A^2\to Z(B)$ such that
\begin{equation}\label{e1b}\theta(xy) = \lambda_1 \theta(x) \theta(y) + \lambda_2 \theta(y) \theta(x) + \mu_1(x)\theta(y) + \mu_2(y)\theta(x) + \nu(x,y)\end{equation}
for all $x,y\in A$. Moreover,  \cite[Lemma 4.6 and Remark 4.7]{FIbook}
show that $\mu_1$ and $\mu_2$ are linear are $\nu$ is bilinear. Using \eqref{e1b} in \eqref{e1a} we obtain
$$(\mu_1-\mu_2)(x)[\theta(y),\theta(z)] + (\mu_1-\mu_2)(y)[\theta(z),\theta(x)]+(\mu_1-\mu_2)(z)[\theta(x),\theta(y)]=0$$
for all $x,y,z\in A$, which implies that $\mu_1=\mu_2$ by \cite[Lemma 4.4]{FIbook}. 
Thus,
\begin{equation}\label{e1}\theta(xy) = \lambda_1 \theta(x) \theta(y) + \lambda_2 \theta(y) \theta(x) + \mu(x)\theta(y) + \mu(y)\theta(x) + \nu(x,y)\end{equation}
for all $x,y\in A$, 
where $\mu=\mu_1=\mu_2$. This yields
\begin{equation}\label{llie}\theta([x,y]) = (\lambda_1-\lambda_2)[\theta(x),\theta(y)] + \nu(x,y)-\nu(y,x).\end{equation}

 Let $(x_n)$ be a sequence in $A$ such that $\lim x_n =0$ and $\lim \mu(x_n) = c$ for some $c\in Z(B)$. Since $\theta$ is continuous and 
 $\nu$ maps 
into $Z(B)$, it follows from \eqref{e1} that $c[\theta(y),w]=0$ for all $y\in A$, $w\in B$. The  $2$-freeness of $B$ yields  $c=0$, so $\mu$ is continuous by the closed graph theorem. 

In the next step, we use \eqref{e1} to compute $\theta(xyz)$ in two ways. First we have
\begin{align*}
\theta((xy)z) = &\lambda_1\theta(xy)\theta(z)+\lambda_2\theta(z)\theta(xy) + \mu(xy)\theta(z) + \mu(z)\theta(xy) + \nu(xy,z)\\
=& \lambda_1^2 \theta(x)\theta(y)\theta(z)+ \lambda_1\lambda_2 \theta(y)\theta(x)\theta(z)+ \lambda_1\mu(x)\theta(y)\theta(z) \\
&+ \lambda_1\mu(y)\theta(x)\theta(z)+\lambda_1\nu(x,y)\theta(z)+
\lambda_1\lambda_2 \theta(z)\theta(x)\theta(y)\\
&+ \lambda_2^2 \theta(z)\theta(y)\theta(x)
+\lambda_2\mu(x)\theta(z)\theta(y) + \lambda_2\mu(y)\theta(z)\theta(x) \\
&+ \lambda_2\nu(x,y)\theta(z)+\mu(xy)\theta(z) + \lambda_1\mu(z)\theta(x)\theta(y) \\&+ \lambda_2\mu(z)\theta(y)\theta(x)+ \mu(x)\mu(z)\theta(y) 
+ \mu(y)\mu(z)\theta(x)\\& + \mu(z)\nu(x,y) +\nu(xy,z). 
\end{align*}
On the other hand,
\begin{align*}
\theta(x(yz)) = &\lambda_1\theta(x)\theta(yz)+\lambda_2\theta(yz)\theta(x) + \mu(x)\theta(yz) + \mu(yz)\theta(x) + \nu(x,yz)\\
=& \lambda_1^2 \theta(x)\theta(y)\theta(z)+ \lambda_1\lambda_2 \theta(x)\theta(z)\theta(y)+ \lambda_1\mu(y)\theta(x)\theta(z) \\
&+ \lambda_1\mu(z)\theta(x)\theta(y)+\lambda_1\nu(y,z)\theta(x)+
\lambda_1\lambda_2 \theta(y)\theta(z)\theta(x)\\
&+ \lambda_2^2 \theta(z)\theta(y)\theta(x)
+\lambda_2\mu(y)\theta(z)\theta(x) + \lambda_2\mu(z)\theta(y)\theta(x) \\
&+ \lambda_2\nu(y,z)\theta(x) + \lambda_1\mu(x)\theta(y)\theta(z) + \lambda_2\mu(x)\theta(z)\theta(y) \\& + \mu(x)\mu(y)\theta(z)+ \mu(x)\mu(z)\theta(y) + \mu(x)\nu(y,z)\\& +\mu(yz)\theta(x)+\nu(x,yz). 
\end{align*}
Comparing the two expressions we arrive at
\begin{align}&\lambda_1\lambda_2[\theta(y),[\theta(x),\theta(z)]]\nonumber\\
 +& \big((\lambda_1+\lambda_2)\nu(x,y) + \mu(xy)-\mu(x)\mu(y)\big)\theta(z) \label{lla} \\
-& \big((\lambda_1+\lambda_2)\nu(y,z) + \mu(yz)-\mu(y)\mu(z)\big)\theta(x) \in Z(B)\nonumber\end{align}
for all $x,y,z\in A$. Let us show that this implies that
\begin{equation}\label{ee'}\lambda_1\lambda_2=0\end{equation}
and 
 \begin{equation}\label{e2}
(\lambda_1 + \lambda_2)\nu(x,y)  + \mu(xy)=\mu(x)\mu(y)
\end{equation}
for all $x,y\in A$. It is enough to prove \eqref{ee'} since \eqref{e2} then follows  (for any fixed $y$) from \cite[Lemma 4.4]{FIbook}. Suppose \eqref{ee'} is not true,
i.e., $\lambda_1\lambda_2\ne 0$. Since $B$ is $2$-free, then there exists a $y_0\in A$ such that
$$b=\lambda_1\lambda_2\theta(y_0)\notin Z(B).$$ Writing $y_0$ for $y$ in \eqref{lla} we get
$$E_1(x)\theta(z) + E_2(z)\theta(x) + \theta(z)F_1(x) + \theta(x)F_2(z)\in Z(B)$$
for all $x,z\in A$, where
\begin{align*}E_1(x)&=b\theta(x),\\E_2(z)&=-b\theta(z) - (\lambda_1+\lambda_2)\nu(y_0,z) - \mu(y_0z) + \mu(y_0)\mu(z),\\
F_1(x)&=\theta(x)b + (\lambda_1+\lambda_2)\nu(x,y_0) + \mu(xy_0) - \mu(x)\mu(y_0),\\
F_2(z)&=-\theta(z)b.
\end{align*}
Since $B$ is $3$-free it follows from \cite[Theorem 4.3]{FIbook} that, in particular, $E_1$ is of the form $E_1(x)=\theta(x)d + \gamma(x)$
for some $d\in B$
and $\gamma:A\to Z(B)$. Consequently, $$b\theta(x) - \theta(x)d =\gamma(x)\in Z(B)$$ for all $x\in A$. However, the $2$-freeness of $B$  now implies that $b\in Z(B)$, contrary to our choice of $y_0$. We have thereby proved that \eqref{ee'} and \eqref{e2} indeed hold.

Let $L = \theta^{-1}(Z(B))$. From \eqref{e1} we see that $L$ is closed under multiplication, so it is a subalgebra of $A$. 
Moreover, from \eqref{llie} we see that $$\theta([x,\ell]) = \nu(x,\ell)-\nu(\ell,x) \in Z(B)$$ for all $x\in A$ and $\ell\in L$, meaning that $[A,\ell]\subseteq L$. 
That is, $L$ is a Lie ideal of $A$.
 We are now in a position to use the classical Herstein's theory of Lie ideals. Specifically, since $A$ is  semiprime,  \cite[Lemma 1.3]{Her} shows that either $L$ is contained in $Z(A)$ or
it  contains a noncentral ideal $J$. Thus, either   (a) holds or $L\subseteq Z(A)$.
As $\theta$ preserves commutativity, we trivially have $Z(A)\subseteq L$, so $L=Z(A)$ in the latter case. That is,  either (a) holds or $Z(B)=\theta(Z(A))$, i.e.,
\begin{equation}\label{e3}
z\in Z(A) \iff \theta(z)\in Z(B).
\end{equation}
We may therefore assume that \eqref{e3} is true. Our goal now is to derive  (b) from it.

We claim that \begin{equation}\label{no1}[\theta^{-1}(wy),\theta^{-1}(y)]=0\end{equation} for all $y\in B$ and $w\in Z(B)$. Indeed, from \eqref{llie} it follows that
$$\theta([\theta^{-1}(wy),\theta^{-1}(y)])\in Z(B),$$
and hence $$[\theta^{-1}(wy),\theta^{-1}(y)]\in Z(A)$$ by \eqref{e3}. Using \cite[Proposition 3.1]{B2} it follows that \eqref{no1} indeed holds.
In a similar fashion, but using \cite[Theorem 2]{B1} instead of \cite[Proposition 3.1]{B2}, we see that  \begin{equation}\label{no2}[\theta^{-1}(y^2),\theta^{-1}(y)]=0\end{equation} for every $y\in B$.



Let $z\in Z(A)$. Then $[zx,x]=0$ for any $x\in A$, and so $[\theta(zx),\theta(x)] =0$. As $B$ is $2$-free,
there exists a $w\in Z(B)$ such that $\theta(zx) - w\theta(x)\in Z(B)$ for all $x\in A$ \cite[Corollary 4.15]{FIbook}. The $2$-freeness also implies that 
$w$ is uniquely determined. We may therefore define $\alpha:Z(A)\to Z(B)$ by $\alpha(z)=w$. Thus, $\alpha$ satisfies 
\begin{equation}\label{em}
\theta(zx)-\alpha(z)\theta(x)\in Z(B) 
\end{equation}
for all $x\in A$ and $z\in Z(A)$. We claim that $\alpha$ is linear.
Indeed, from $\theta((z+z')x)= \theta(zx) + \theta(z'x)$, where $z,z'\in Z(A)$ and $x\in A$, we infer that
$$\big(\alpha(z+z') - \alpha(z) - \alpha(z')\big)\theta(x)\in Z(B),$$
and hence $\alpha(z+z') = \alpha(z) + \alpha(z')$ since $B$ is $2$-free. Similarly we see that $\alpha$ is homogeneous. 
If $\alpha(z)=0$, then $\theta(zx)\in Z(B)$ for every $x\in A$, so $zx\in Z(A)$ by \eqref{e3}, and hence $z=0$ as $A$ is $2$-free. Finally, we claim that $\alpha$ is surjective.
To prove this, note that $\theta^{-1}$ satisfies an analogous condition \eqref{no1} and so, since $A$ is $2$-free, there exists a map $\beta:Z(B)\to Z(A)$ satisfying
\begin{equation}\label{bn}\theta^{-1}(wy) - \beta(w)\theta^{-1}(y)\in Z(A)\end{equation}
for all $w\in Z(B)$, $y\in B$. Applying $\theta$ to this relation we obtain
$$wy - \theta(\beta(w)\theta^{-1}(y))\in Z(B),$$ which in turn implies that
$$\big(w - \alpha(\beta(w))\big) y \in Z(B).$$
As $B$ is $2$-free,  this gives $w=\alpha(\beta(w))$ for every $w\in Z(B)$. Thus, $\alpha$ is surjective, and so is a linear isomorphism 
from $Z(A)$ onto $Z(B)$ (incidentally, it can be easily shown that it is even an algebra isomorphism, but we will not need this). 

Next, since, by \eqref{no2},  $\theta^{-1}(y^2)$ and $\theta^{-1}(y)$  commute for every $y\in B$ and $A$ is $3$-free, \cite[Corollary 4.15]{FIbook} implies that there
exist an $\omega\in Z(A)$ and a linear map $\eta:B\to Z(A)$ 
 such that
$$\theta^{-1}(y^2) - \omega \theta^{-1}(y)^2 - \eta(y)\theta^{-1}(y) \in Z(A)$$
for all $y\in B$. Applying $\theta$ to this relation and using \eqref{em} we obtain
$$y^2 - \lambda\theta(\theta^{-1}(y)^2) - \alpha(\eta(y))y \in Z(B)$$
where $$\lambda = \alpha(\omega)\in Z(B).$$
On the other hand,  \eqref{e1} shows that 
$$\theta(\theta^{-1}(y)^2) - (\lambda_1+\lambda_2) y^2 - 2 \alpha(\mu(\theta^{-1}(y)))y \in Z(B).$$
Comparing these two relations involving $\theta(\theta^{-1}(y)^2)$ it follows that
$$(1-\lambda(\lambda_1+\lambda_2))y^2 - \big(\alpha(\eta(y)) + 2\alpha(\mu(\theta^{-1}(y))\big)y \in Z(B)$$
for all $y\in B$. Since $B$ is $3$-free, linearizing this identity and then using \cite[Lemma 4.4]{FIbook} we obtain $1-\lambda(\lambda_1+\lambda_2)=0$. Thus, $\lambda$ is invertible and
$$\lambda^{-1}=\lambda_1+\lambda_2.$$

Define $\varphi:A\to B$ by
$$\varphi(x) = \lambda_1\big(\theta(x) + \lambda \mu(x)\big).$$
Using \eqref{e1} along with \eqref{ee'} we obtain
$$\varphi(xy) = \lambda_1^2 \theta(x)\theta(y) + \lambda_1\mu(x)\theta(y) 
+\lambda_1\mu(y)\theta(x) + \lambda_1\nu(x,y)+
\lambda_1\lambda \mu(xy)$$
and
$$\varphi(x)\varphi(y) = \lambda_1^2 \theta(x)\theta(y) + \lambda_1^2 \lambda \mu(x)\theta(y) +  \lambda_1^2 \lambda \mu(y)\theta(x) + \lambda_1^2\lambda^{2}
\mu(x)\mu(y).
$$
Since $\lambda = (\lambda_1+\lambda_2)^{-1}$ and $\lambda_1\lambda_2=0$, $$\lambda_1^2 \lambda = \lambda_1.$$ Together with \eqref{e2}, this implies that
$\varphi(xy) = \varphi(x)\varphi(y)$. Thus, $\varphi$ is an algebra homomorphism. Since $\theta$ and $\mu$ are continuous, so is $\varphi$.

Similarly, we see that  $\psi:A\to B$ given by
$$\psi(x) = \lambda_2\big(\theta(x) + \lambda \mu(x)\big)$$
is a continuous algebra antihomomorphism.

We now define
 $$\sigma=\varphi  + \psi.$$ 
 Note that $$\sigma(x)= (\lambda_1+\lambda_2)\theta(x) + \mu(x),$$ and hence
\begin{equation}\label{jk} \theta(x) = \lambda\sigma(x) + \tau(x)\end{equation} where $$\tau(x)=-\lambda\mu(x)\in Z(B);$$
obviously, $\tau$ is linear and continuous. 
From  \eqref{e3} we easily infer that for any $z\in A$,
\begin{equation}\label{nn} z\in Z(A)\iff \sigma(z)\in Z(B).\end{equation}
Observe also that $\lambda_1\lambda_2=0$ shows that $\varphi(x)\psi(y)=\psi(y)\varphi(x)=0$ for all $x,y\in A$, which in turn implies that
\begin{equation}\label{za}
\sigma(zx)=\sigma(z)\sigma(x)\,\,\,\,\mbox{and}\,\,\,\, \sigma(x^2) = \sigma(x)^2
\end{equation}
for all $z\in Z(A)$, $x\in A$. 

Our next goal is to show  that $\sigma$ is bijective. Suppose $\sigma(c)=0$ for some $c\in A$. Then $c\in Z(A)$ 
by \eqref{nn}, and so
\eqref{za} shows that $\sigma(cx) =0$ for every $x\in A$. Using \eqref{nn} again, we have $cA\subseteq Z(A)$. However, this implies $c=0$ since $A$ is $2$-free. It remains to prove that
$\sigma$ is surjective. To this end, we first show that the restriction of $\sigma$ to $Z(A)$ is equal to $\alpha$. From \eqref{jk} and \eqref{za} it follows that
\begin{align*}
\theta(zx) =& \lambda\sigma(zx) +\tau(zx) = \lambda\sigma(z)\sigma(x) + \tau(zx) \\=& \sigma(z)\theta(x) - \sigma(z)\tau(x) + \tau(zx)\end{align*}
for all $z\in Z(A)$, $x\in A$. Thus, $\theta(zx) - \sigma(z)\theta(x) \in Z(B)$, which together with \eqref{em} shows that
$(\sigma(z) - \alpha(z))\theta(x)\in Z(B)$. But then $\sigma(z)=\alpha(z)$ since  $A$ is $2$-free. Now, we know that $\alpha$ is a bijection from $Z(A)$ onto $Z(B)$, so we can write $\lambda = \sigma(\alpha^{-1}(\lambda))$ and $\tau(x) =  \sigma(\alpha^{-1}(\tau(x))$. Using \eqref{jk} and \eqref{za}, we thus have
$$\theta(x)= \sigma(\alpha^{-1}(\lambda))\sigma(x) + \sigma(\alpha^{-1}(\tau(x)) = \sigma\big(\alpha^{-1}(\lambda)x + \alpha^{-1}(\tau(x))\big).$$
Since $\theta$ is surjective, it follows that so is $\sigma$.

Observe that $f=\lambda_1\lambda$ is a central idempotent, $1-f=\lambda_2\lambda$, $f\lambda_1  = \lambda_1$ and hence $f\varphi(x)=\varphi(x)$ for every $x\in A$;
similarly, $(1-f)\lambda_2  = \lambda_2$ and hence $(1-f)\psi(x)=\psi(x)$ for every $x\in A$. From \eqref{nn} and \eqref{za} we see that $e=\sigma^{-1}(f)$ is also a central idempotent. Since $\sigma(1)=1$ by \eqref{za}, we have $1-e = \sigma^{-1}(1-f)$. Next, using \eqref{za} we see that $$\sigma(ex)= f\sigma(x)= f\sigma(ex) = \varphi(ex).$$ Thus, the restriction of $\sigma$ to $eA$ is equal to $\varphi$, and $\sigma$ maps $I=eA$ onto $I'=fB$. Similarly,
the restriction of $\sigma$ to $(1-e)A$ is equal to $\psi$, and $\sigma$ maps $J=(1-e)A$ onto $J'=(1-f)B$. Thus, $\sigma$ is the direct sum of an isomorphism and an antiisomorphism.
\end{proof}

Our main examples of  Banach algebras satisfying the conditions of Theorem \ref{t} are von Neumann algebras with no central summands of type $I_1$ and $I_2$. Let us state explicitly the corresponding corollary. It should be mentioned that this result is known in the case where $\theta$ preserves commutativity in both directions
 \cite{BM} (see also \cite[Section 6.5]{AM}).

\begin{corollary}\label{tc}
Let $A$ and $B$ be  von Neumann algebras with no central summands of type $I_1$ and $I_2$. If $\theta:A\to B$ is a bijective  continuous commutativity preserving linear map, then one of the following statements holds:
\begin{enumerate}
\item[{\rm (a)}] 
$A$ contains a noncentral ideal $J$ such that $\theta(J) \subseteq Z(B)$ (so $\theta$ preserves commutativity in one direction only).
 \item[{\rm (b)}] 
$\theta$ is of the form \eqref{thr} with $\lambda$  an invertible element in $Z(B)$, $\tau$  a linear map from $A$ to $Z(B)$, and 
$\sigma:A\to B$  the direct sum  of an isomorphism and an antiisomorphism (so  $\theta$ preserves commutativity in both directions). Moreover, $\sigma$ and $\tau$ are continuous.
\end{enumerate}
\end{corollary}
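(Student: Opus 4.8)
The plan is to deduce Corollary \ref{tc} directly from Theorem \ref{t}, the work consisting entirely in checking that von Neumann algebras with no central summands of type $I_1$ and $I_2$ meet every hypothesis imposed there. First I would record the elementary facts: a von Neumann algebra is a unital $C^*$-algebra, hence a unital Banach algebra, and every $C^*$-algebra is semiprimitive, so in particular $A$ is semiprime. Since $\theta$ is assumed bijective, continuous, linear, and commutativity preserving, the only substantive conditions left to verify are that $A$ is zLpd and that both $A$ and $B$ are $3$-free.

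The zLpd condition is immediate from the main results of \cite{ABEV0,ABEV}, recalled in the introduction, that every $C^*$-algebra is zLpd; this applies to $A$. For $3$-freeness I would invoke the structural fact stated in Section \ref{s2}: by \cite[Theorem 3.2 and Lemma 3.12]{ABV}, a von Neumann algebra with no central summands of type $I_1,\dots,I_{d-1}$ is $d$-free. Specializing to $d=3$, the standing hypothesis that $A$ and $B$ have no central summands of type $I_1$ and $I_2$ yields that both $A$ and $B$ are $3$-free. At this point all hypotheses of Theorem \ref{t} are in force, so Theorem \ref{t} applies verbatim and produces exactly the dichotomy \textrm{(a)}/\textrm{(b)}, including the continuity of $\sigma$ and $\tau$ in case \textrm{(b)}.

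Since the argument is merely a verification of hypotheses, there is no real obstacle; the only point deserving care is to match the no-$I_1$-and-$I_2$ condition with $3$-freeness rather than with $2$- or $4$-freeness, and to note that Theorem \ref{t} requires $3$-freeness of the target algebra $B$ as well as of the source $A$, which is why the hypothesis must be imposed on both von Neumann algebras.
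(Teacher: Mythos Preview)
Your proposal is correct and matches the paper's own treatment: the paper does not give a separate proof of Corollary~\ref{tc}, but simply introduces it by remarking that von Neumann algebras with no central summands of type $I_1$ and $I_2$ are the main examples satisfying the hypotheses of Theorem~\ref{t}, with the $3$-freeness supplied by \cite[Theorem 3.2 and Lemma 3.12]{ABV} and the zLpd property by \cite{ABEV0}. Your verification of the hypotheses is exactly this.
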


We will not consider corollaries concerning prime  algebras since rather definitive results about them are already known \cite[Section 7.1]{FIbook}. Let us conclude by mentioning
that if $R$ is a unital cyclically amenable Banach algebra, then $A=M_n(R)$, $n\ge 3$, is both zLpd  \cite[Theorem 4.4]{ABEV} and $3$-free; moreover, if $R$ is semiprime, then so is $A$. Thus, there are other classes of Banach algebras to which Theorem \ref{t} is applicable.




\bigskip
\noindent
{\bf Acknowledgment}. The author would like to thank the referee for useful suggestions.

\end{document}